\newtheorem{definition}{Definition}[section]
\newtheorem{theorem}{Theorem}[section]
\newtheorem{proposition}{Proposition}[section]
\newcommand{\nn}{\mathbb{N}}
\newcommand{\cc}{\mathbb{C}}
\newcommand{\qq}{\mathbb{Q}}
\newcommand{\di}{\displaystyle}
\newcommand{\st}{\subset}
\begin{document}
\title{\bf Doubly universal Taylor series on simply connected domains }

\author{N. Chatzigiannakidou and V. Vlachou}         
\date{}
\maketitle 
\begin{abstract} In this article we deal with  the existence of doubly universal Taylor series defined on simply connected domains with respect
to any center  and 
we generalize the results of G. Costakis and N. Tsirivas for the unit disk.
\footnote{ 2010 Mathematics
subject classification: 30K05 (47A16, 41A30).\\
\textbf{Keywords: } universal Taylor series, doubly universality, disjoint hypercyclicity, degree of approximation.}
\end{abstract}
\noindent
\section{Introduction }
Let $\Omega\subset \cc$ be an open set. We denote by $H(\Omega)$  the space
of functions, holomorphic in $\Omega$, endowed with the topology of uniform convergence on compacta. 
Moreover, for a compact set $K\subset\cc$, we denote
$$\mathcal{A}(K)=\{g\in H(K^o): \ g \text{ is
continuous on } K\}$$ 
and
 $$\mathcal{M}=\{K\st
\cc: K \ \text{compact set and } \ K^c \text{ connected
set}\}.$$
Let $\zeta_{0}\in\Omega$.
It is well known that a function
$f\in H(\Omega)$,  has a Taylor 
expansion around $\zeta_0$,  which is valid inside the disk of convergence.

If we denote by $=\sum_{n=0}^{N}
\frac{f^{(n)}(\zeta_0)}{n!}(z-\zeta_0)^n,$ \ \ $N=1,2,\ldots$ the partial
sums of the Taylor expansion of $f$ around $\zeta_0$, then this sequence of polynomials
may have very strong approximation properties outside $\Omega$. To be more specific, let us give
the definition of universal Taylor series, as given by V.Nestoridis:\\
A function $f\in{H(\Omega)}$ is said to belong to the collection $U(\Omega,\zeta_{0})$ of functions with universal Taylor series expansions around $\zeta_{0}$, if the partial sums $\di \{S_{N}(f,\zeta_{0})(z), N=1,2,\ldots\}$ are dense in $A(K)$, for every $K\in \mathcal{M}$ disjoint from $\Omega$ (the topology of $A(K)$ is induced by the norm
 $\di ||g||_K=\max_{z\in K}|g(z)|$).
\\V. Nestoridis \cite{N2}, \cite{N3} has shown that $U(\Omega,\zeta_{0})\neq{\emptyset}$, for any simply connected domain $\Omega$ and any point $\zeta_{0}\in\Omega$. In particular, he showed  that the collection $U(\Omega,\zeta_{0})$ is a dense, $G_{\delta}$ subset of the space $H(\Omega)$.

Recently, G. Costakis and N. Tsirivas in \cite{c-t} worked for $\Omega=\mathbb{D}$ (the open unit disk) and  introduced a stronger notion of universality with respect to Taylor series. Let us give the definition of their class.

\begin{definition} Let $(\lambda_{n})$ be a strictly increasing sequence of positive integers. A function $f\in{H(\mathbb{D})}$ belongs to the class $U_{\text{CT}}(\mathbb{D},(\lambda_{n}))$ if 
the set $ \{(S_n(f,0)(z),S_{\lambda_n}(f,0)(z)), \ n=1,2,\ldots\}$ is dense in   ${A(K)\times{A(K)}}$, for every $K\in \mathcal{M}$ disjoint from $\mathbb{D}$. 
Such a function $f$ will be called doubly universal Taylor series with respect to the sequences $(n)$, $(\lambda_{n})$.
\end{definition}

G. Costakis and N.Tsirivas proved that the class $U_{\text{CT}}(\mathbb{D},(\lambda_{n}))$ is non-empty, if and only if,  $\di \limsup_{n}\frac{\lambda_{n}}{n}=+\infty$.\\
We study a  class of functions, with even stronger approximation properties. Namely, 

\begin{definition} Let $(\lambda_{n})$ be a strictly increasing sequence of positive integers, $\Omega$ a simply connected domain and $\zeta_{0}\in{\Omega}$. A function $f\in{H(\Omega)}$ belongs to the class $U_{CV}(\Omega,(\lambda_{n}),\zeta_{0})$ if $ \{( S_n(f,\zeta_0)(z),S_{\lambda_n}(f,\zeta_0)(z)), \ n=1,2,\ldots\}$  is dense in   ${A(K_1)\times{A(K_2)}}$ for every pair of  sets $K_{1},\,K_{2}\in\mathcal{M}$, disjoint from $\Omega$.
\end{definition}
We improve the method used in \cite{c-t} in order to obtain the result for different compact sets $K_1,K_2$ and we also overcome some technical difficulties
to generalize it to any simply connected domain $\Omega$. Our  main result is 
again that $U_{\text{CV}}(\Omega,(\lambda_{n}))$ is non-empty, if and only if,  $\di \limsup_{n}\frac{\lambda_{n}}{n}=+\infty$.\\
For analogous results in other type of universalities we refer to \cite{bernal}, \cite{bes}. The article of G. Costakis and N. Tsirivas has  complete bibliography on the subject, which  features  the motivation for investigating such questions.

\section{The positive result}
Let $K,\,L$ be compact sets and $f:K\to \cc$ be a continuous function defined on $K$.\\
Let $p$ be a polynomial. We denote by $deg p$ the degree of the polynomial i.e. the  highest degree of its terms and by $deg^- p$ the  lowest degree of its (non-zero) terms.
Then for any choice of integers  $n>m> 1$ we use the notation:
$$d_{n,m}(f,K,L)=\inf\{\max\{||f-p||_K,\,||p||_{L}\}:  p \text{ polynomial such that }$$
$$deg^- p\geq m \text{ and } deg p\leq n \}. $$
The following proposition is crucial for our result. It is a slight modification of the corresponding proposition in \cite{c-t}.
\begin{proposition}\label{B-Wtype} Let $K,L\in \mathcal{M}$ be two disjoint compact sets such that $0\in L^o$.  Let, in addition, $\{\tau_n\}_{n\in\nn}$ and 
 $\{\sigma_n\}_{n\in\nn}$ be two sequences of positive integers such that $\displaystyle \frac{\tau_n}{\sigma_n}\to +\infty$ and $U\subset\cc$ open, $K\st U$.
Then there exits $\theta\in (0,1)$  such that for every function $f$  holomorphic
in  $U$ 
$$\limsup_n d_{\tau_n,\sigma_n}(f,K,L)^{\frac{1}{\tau_n}}<\theta.$$
\end{proposition}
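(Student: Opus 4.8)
The plan is to produce, for every large $n$, a single admissible polynomial $p_n$ (one with $\deg^- p_n\ge\sigma_n$ and $\deg p_n\le\tau_n$) for which $\max\{\|f-p_n\|_K,\|p_n\|_L\}$ is exponentially small in $\tau_n$, and then to read the required bound on $d_{\tau_n,\sigma_n}$ directly off this. First I would fold both constraints into a single approximation problem: let $g$ be the function equal to $f$ on a neighborhood of $K$ and equal to $0$ on a neighborhood of $L$. Since $K$ and $L$ are disjoint compacta, $g$ is holomorphic on a neighborhood of $K\cup L$. The structural point that legitimizes \emph{polynomial} (not merely rational) approximation is that $K\cup L$ itself has connected complement: by Alexander duality $\check H^1(K\cup L)=\check H^1(K)\oplus\check H^1(L)=0$, because $K,L\in\mathcal{M}$, and the vanishing of $\check H^1(K\cup L)$ is equivalent to $(K\cup L)^c$ being connected. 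Hence the Bernstein--Walsh theorem applies to $g$ on $K\cup L$: there exist $R>1$, depending only on $U$ and the geometry of $K\cup L$ and \emph{not} on $f$, and polynomials $Q_n$ with $\deg Q_n\le\tau_n$ such that $\|g-Q_n\|_{K\cup L}\le A\rho^{\tau_n}$ for all large $n$, where $\rho:=1/R\in(0,1)$ and $A=A(f)$. In particular $\|f-Q_n\|_K\le A\rho^{\tau_n}$ and $\|Q_n\|_L\le A\rho^{\tau_n}$.

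Next comes the step that is genuinely new relative to single universality: turning $Q_n$ into a polynomial with a gap of low-order terms. Fix $r>0$ with $\overline{D(0,r)}\subset L^o$, which is possible precisely because $0\in L^o$. Writing $Q_n(z)=\sum_j a_jz^j$, the Cauchy estimates on $\overline{D(0,r)}\subset L$ give $|a_j|\le\|Q_n\|_L\,r^{-j}\le A\rho^{\tau_n}r^{-j}$. I would then set $p_n=Q_n-\sum_{j<\sigma_n}a_jz^j$, the truncation discarding every term of degree below $\sigma_n$; this $p_n$ satisfies $\deg^- p_n\ge\sigma_n$ and $\deg p_n\le\tau_n$, so it is admissible. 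Writing $B:=\max\{1,(\max_{z\in K\cup L}|z|)/r\}\ge1$, the discarded part is controlled on both sets, e.g. $\big\|\sum_{j<\sigma_n}a_jz^j\big\|_K\le\sum_{j<\sigma_n}|a_j|(\max_{z\in K}|z|)^j\le A\rho^{\tau_n}\sigma_n B^{\sigma_n}$, and identically on $L$. Consequently $\max\{\|f-p_n\|_K,\|p_n\|_L\}\le C\rho^{\tau_n}\sigma_n B^{\sigma_n}$, and therefore $d_{\tau_n,\sigma_n}(f,K,L)\le C\rho^{\tau_n}\sigma_n B^{\sigma_n}$.

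Finally I would extract $\tau_n$-th roots and pass to the limit: $d_{\tau_n,\sigma_n}(f,K,L)^{1/\tau_n}\le C^{1/\tau_n}\rho\,\sigma_n^{1/\tau_n}B^{\sigma_n/\tau_n}$. Since $\tau_n\to+\infty$ we have $C^{1/\tau_n}\to1$ and $\sigma_n^{1/\tau_n}\to1$, while $B^{\sigma_n/\tau_n}=\exp\!\big((\sigma_n/\tau_n)\log B\big)\to1$ because $\tau_n/\sigma_n\to+\infty$ forces $\sigma_n/\tau_n\to0$. Hence $\limsup_n d_{\tau_n,\sigma_n}(f,K,L)^{1/\tau_n}\le\rho<1$, and any $\theta\in(\rho,1)$ works. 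It is worth noting that $\rho$, and hence $\theta$, depends only on $U,K,L$ and not on the individual $f$, since the only $f$-dependence is through the constant $A$, whose $\tau_n$-th root tends to $1$; this is exactly why a single $\theta$ serves every $f\in H(U)$ as the statement demands.

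The main obstacle is the truncation step, and identifying where each hypothesis is spent clarifies it. Removing the low-degree terms could in principle destroy the approximation on $K$; the hypothesis $0\in L^o$ is precisely what rescues this, since it is what bounds the deleted coefficients by $A\rho^{\tau_n}r^{-j}$ via Cauchy estimates on a disk interior to $L$. The gap condition $\tau_n/\sigma_n\to+\infty$ is then used exactly once, to guarantee that the parasitic amplification factor $B^{\sigma_n}$ produced by the truncation is overwhelmed by the exponential gain $\rho^{\tau_n}$ coming from Bernstein--Walsh. Everything else — the existence of the exponential simultaneous rate on $K\cup L$ and the final limit computation — is routine once the connectedness of $(K\cup L)^c$ is in hand.
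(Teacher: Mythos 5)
Your proof is correct: gluing $f$ near $K$ with $0$ near $L$, applying the Walsh--Bernstein-type geometric approximation theorem on $K\cup L$ (legitimate since $(K\cup L)^c$ is connected and $K\cup L$ is non-polar, $L$ having interior), and then deleting the coefficients of degree below $\sigma_n$ with Cauchy estimates on a disk $\overline{D(0,r)}\subset L^o$ controlling the damage, yields the bound $C\rho^{\tau_n}\sigma_n B^{\sigma_n}$ whose $\tau_n$-th root tends to $\rho<1$ uniformly in $f$, exactly as required (the only unstated micro-steps, that $\sigma_n\le\tau_n$ eventually so $\sigma_n^{1/\tau_n}\to 1$, and the harmless degenerate case $p_n\equiv 0$, are immediate). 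This is essentially the approach the paper intends: its own ``proof'' is merely a pointer to the proof of Theorem 2.1 in \cite{c-t}, which is the same Bernstein--Walsh-plus-truncation scheme carried out there with $L$ a closed disk about the origin, and your write-up supplies the details in the extra generality (arbitrary $L\in\mathcal{M}$ with $0\in L^o$ and $K\subset U$) that this proposition actually asserts.
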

\begin{proof} It is similar to the proof of theorem 2.1 in \cite{c-t}.
\end{proof}
\begin{theorem} Let $\Omega$ be a simply connected domain and $\zeta_{0}\in\Omega$. Consider $(\lambda_{n})_{n\in\nn}$ a strictly increasing sequence of positive integers such that $\displaystyle\limsup_{n\in\nn}\frac{\lambda_{n}}{n}=+\infty$. Then the set $U_{CV}(\Omega,(\lambda_{n}),\zeta_{0})$ is $G_{\delta}$ and dense in $H(\Omega)$.\end{theorem}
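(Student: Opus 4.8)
The plan is to establish both the $G_{\delta}$ and the density statements by a Baire category argument, so the whole proof reduces to exhibiting $U_{CV}(\Omega,(\lambda_{n}),\zeta_0)$ as a countable intersection of open dense subsets of $H(\Omega)$. After translating so that $\zeta_0=0$, I would first fix the relevant countable data: an exhaustion $\{M_j\}_{j\in\nn}$ of $\Omega$ by compact sets with connected complement and $0\in M_j^o$ (available because $\Omega$ is simply connected), a cofinal sequence $\{L_m\}_{m\in\nn}\subset\mathcal M$ of compact sets disjoint from $\Omega$ such that every $K\in\mathcal M$ disjoint from $\Omega$ lies in some $L_m$ (the standard construction from the theory of universal Taylor series), and the countable family $\mathcal P$ of polynomials with coefficients in $\qq+i\qq$, which is dense in each $A(L_m)$ by Mergelyan's theorem. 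For indices $m_1,m_2,s,j\in\nn$ and $p,q\in\mathcal P$ I would set
\[
E=\Big\{f\in H(\Omega):\ \exists\, n\ge j\ \text{with}\ \|S_n(f,0)-p\|_{L_{m_1}}<\tfrac1s\ \text{and}\ \|S_{\lambda_n}(f,0)-q\|_{L_{m_2}}<\tfrac1s\Big\},
\]
and verify that $U_{CV}(\Omega,(\lambda_{n}),0)$ is the intersection of all these sets: the inclusion $\subseteq$ is immediate, and $\supseteq$ follows because any $K_1,K_2\in\mathcal M$ disjoint from $\Omega$ are contained in suitable $L_{m_1},L_{m_2}$, and any targets in $A(K_i)$ are approximated by members of $\mathcal P$.

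Each $E$ is open: the coefficient functionals $f\mapsto f^{(k)}(0)/k!$ are continuous on $H(\Omega)$, so $f\mapsto S_N(f,0)$ is continuous into the space of polynomials equipped with the $\|\cdot\|_{L}$–norms, and $E$ is a union over $n\ge j$ of the resulting open conditions. The substance of the theorem is therefore the density of each $E$. Since $\Omega$ is simply connected, polynomials are dense in $H(\Omega)$ by Runge's theorem, so it suffices to approximate, on a given $M=M_J$ and within a given $\varepsilon>0$, an arbitrary polynomial $g$ by some $f\in E$.

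Fix such a $g$, of degree $d$ say, and choose, using $\limsup_n\lambda_n/n=+\infty$, an integer $n\ge\max(j,d)$, as large as needed, along which $\lambda_n/n$ is arbitrarily large. I would look for $f=g+Q_1+Q_2$, where $Q_1$ has degree $\le n$ and $Q_2$ has all its terms of degree in $(n,\lambda_n]$; then $S_n(f,0)=g+Q_1$ and $S_{\lambda_n}(f,0)=g+Q_1+Q_2$. The first correction $Q_1$ is produced by Proposition \ref{B-Wtype} applied to the fixed entire function $p-g$ with $K=L_{m_1}$, $L=M$ and degrees in the appropriate range $[\sigma_n,n]$: it satisfies $\|(p-g)-Q_1\|_{L_{m_1}}$ and $\|Q_1\|_M$ both $<\theta_1^{\,n}$ for some $\theta_1<1$, so that $S_n(f,0)=g+Q_1\approx p$ on $L_{m_1}$. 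The second correction $Q_2$ is produced by the same Proposition with $K=L_{m_2}$, $L=M$ and degrees in $(n,\lambda_n]$, approximating the function $q-g-Q_1$ on $L_{m_2}$ while staying small on $M$; then on $L_{m_2}$ one has $S_{\lambda_n}(f,0)=g+Q_1+Q_2\approx q$, while $\|Q_1+Q_2\|_M$ is small, so $\|f-g\|_M<\varepsilon$ and $f\in E$.

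The delicate point — and the place where the hypothesis $\limsup_n\lambda_n/n=+\infty$ is indispensable — is this second step, because when $L_{m_1}$ and $L_{m_2}$ overlap the polynomial $Q_1$ cannot be made small on $L_{m_2}$, so the function $q-g-Q_1$ that $Q_2$ must approximate genuinely depends on $n$. I would control this by using the effective form of the Bernstein–Walsh estimate underlying Proposition \ref{B-Wtype}, namely $d_{\tau_n,\sigma_n}(h,K,L)\le A\,\|h\|_{\overline U}\,\theta^{\tau_n}$ with $A,\theta$ independent of $h$: since $Q_1$ has degree $\le n$ and is bounded on the fixed sets $M$ and $L_{m_1}$, its norm on a neighbourhood of $L_{m_2}$ grows at most like $C^{\,n}$, so $\|q-g-Q_1\|$ there is $O(C^{\,n})$, while the approximation error for $Q_2$ is $O\!\big(C^{\,n}\theta^{\lambda_n}\big)$. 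Taking $n$ along a subsequence with $\lambda_n/n\to+\infty$ forces $C^{\,n}\theta^{\lambda_n}\to 0$, which closes the estimate. With every $E$ open and dense, Baire's theorem yields that their intersection $U_{CV}(\Omega,(\lambda_{n}),0)$ is a dense $G_{\delta}$ in $H(\Omega)$. The two obstacles I anticipate are the topological construction of the families $\{M_j\}$ and $\{L_m\}$ for a general simply connected $\Omega$, and the extraction of the $h$–uniform constant in Proposition \ref{B-Wtype}; these are precisely the technical points the authors single out as the novelty beyond the disk case.
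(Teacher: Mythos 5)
Your Baire framework, the sets $E$, and the idea of correcting $g$ by a low-degree block $Q_1$ plus a lacunary block $Q_2$ supplied by Proposition \ref{B-Wtype} all match the paper's proof. The gap is exactly at the point you flag as an anticipated ``obstacle'', and it is a real one: your second correction must approximate $q-g-Q_1$, and since you tie $Q_1$ to $n$ (its degree window $[\sigma_n,n]$ moves with $n$), you need an estimate of the form $d_{\lambda_n,n+1}(h,L_{m_2},M)\le A\,\|h\|\,\theta^{\lambda_n}$ in which $A$, $\theta$ \emph{and the range of $n$ for which it holds} are independent of $h$. Proposition \ref{B-Wtype} as stated does not provide this: it fixes $\theta$ uniformly in $f$, but it is a $\limsup$ statement, so the index beyond which $d_{\tau_n,\sigma_n}(f,K,L)<\theta^{\tau_n}$ holds depends on $f$; applying it to the moving targets $h_n=q-g-Q_1^{(n)}$ is therefore not legitimate. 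Your argument can be rescued --- for instance, Banach--Steinhaus applied to the continuous seminorms $h\mapsto d_{\tau_k,\sigma_k}(h,K,L)/\theta^{\tau_k}$ on the Fr\'echet space $H(U)$ upgrades the pointwise bound of the Proposition to the equicontinuous bound you want, and a Bernstein--Walsh growth lemma bounds $\|Q_1^{(n)}\|$ near $L_{m_2}$ by $C^n$, after which $C^n\theta^{\lambda_n}\to 0$ along the subsequence closes the estimate as you say --- but neither ingredient is proved, or even precisely stated, in your proposal.

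The paper avoids the whole issue by decoupling the first correction from $n$. By Runge's theorem (the union $L\cup K_{m_1}$ of two disjoint compact sets, each with connected complement, again has connected complement) there is a \emph{single, fixed} polynomial $p$ with $\|p-g\|_L<\varepsilon/2$ and $\|p-f_{j_1}\|_{K_{m_1}}<\frac{1}{2s}$; this $p$ plays the role of your $g+Q_1$. Proposition \ref{B-Wtype} is then applied exactly once, to the fixed function $f_{j_2}(z+\zeta_0)-p(z+\zeta_0)$, with degree windows $[\mu_n+1,\lambda_{\mu_n}]$ where $\lambda_{\mu_n}/\mu_n\to\infty$, and finally $n_0$ is chosen so large that $\mu_{n_0}\ge \deg p$ and $\theta^{\lambda_{\mu_{n_0}}}$ is small; then $S_{\mu_{n_0}}(f,\zeta_0)=p$ and $S_{\lambda_{\mu_{n_0}}}(f,\zeta_0)=f$ hold automatically and the three estimates follow. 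Your observation that $Q_1$ cannot be made small on $L_{m_2}$ when the two compacta overlap is correct, but it only forces the second target to depend on $Q_1$, not on $n$: once the first correction is fixed \emph{before} $n$ is chosen, the Proposition applies as stated to one fixed function, and no $h$-uniform constant is needed.
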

 \begin{proof} Let $(f_{j})_{j\in\nn}$ be an enumeration of all polynomials with coefficients in $\qq+i\qq$ and let $(K_{m})_{m\in\nn}$ be a sequence of sets in $\mathcal{M}$, disjoint from $\Omega$ such that the following holds: every non-empty compact set $K\subset{\cc\setminus{\Omega}}$, having connected complement, is contained in some $K_{m}$ (for the existence of such a sequence we refer to\ \cite{N3}).
\\For any positive integers $m_1,m_2,j_1,j_2,\,s,\,n,$ we denote by $E(m_{1},m_{2},j_{1},j_{2},s,n)$ the set
\\$E(m_{1},m_{2},j_{1},j_{2},s,n)=$
$$\{f\in{H(\Omega)}:||S_{n}(f,\zeta_{0})-f_{j_{1}}||_{K_{m_{1}}}<\frac{1}{s}\text{ and }||S_{\lambda_{n}}(f,\zeta_{0})-f_{j_{2}}||_{K_{m_{2}}}<\frac{1}{s}\}.$$
Using Mergelyan's theorem it is easy to prove that  $$\displaystyle U_{CV}(\Omega,(\lambda_{n}),\zeta_{0})=\bigcap_{m_{1},m_{2},j_{1},j_{2},s\in\nn}\bigcup_{n\in\nn}E(m_{1},m_{2},j_{1},j_{2},s,n).$$
(see for example similar proof in \cite{N3}).\\
Moreover,  for every  $m_{1},m_{2},j_{1},j_{2},s,n\in\nn$ the set $E(m_{1},m_{2},j_{1},j_{2},s,n)$ is open (see for example \cite{N3}).\\
Therefore, in view of Baire's Category theorem, it suffices to prove that for every choice of positive integers $m_1,m_2,j_1,j_2,\,s$ the set $\di\bigcup_{n\in\nn}E(m_{1},m_{2},j_{1},j_{2},s,n)$ is dense in $H(\Omega)$.\\
Fix $m_{1},m_{2},j_{1},j_{2},s\in\nn$. Let $\varepsilon>0$, $L\subset{\Omega}$ be a compact set and $g\in{H(\Omega)}$. Without loss of generality,we may assume that $\zeta_{0}\in{L^{o}}$ and that the compact set $L$ has connected complement (note that $\Omega$ is simpply connected).\\
We will prove the existence of a funtion  $f\in{H(\Omega)}$ and an $n\in\nn$, such that:
\begin{itemize}
\item $\di\sup_{z\in{L}}|f(z)-g(z)|<\varepsilon\\$
\item $\di\sup_{z\in{K_{m_{1}}}}|S_{n}(f,\zeta_{0})(z)-f_{j_{1}}(z)|<\frac{1}{s}\\$
\item $\di\sup_{z\in{K_{m_{2}}}}|S_{\lambda_{n}}(f,\zeta_{0})(z)-f_{j_{2}}(z)|<\frac{1}{s}.$
\end{itemize}
First, we apply  Runge's theorem to find  a polynomial $p$ such that \\
 $\displaystyle\sup_{z\in{L}}|p(z)-g(z)|<\frac{\varepsilon}{2}$ and $\displaystyle\sup_{z\in{K_{m_{1}}}}|p(z)-f_{j_{1}}(z)|<\frac{1}{2s}$.\\
 Since $\displaystyle\limsup_{n\in\nn}\frac{\lambda_{n}}{n}=+\infty$, there exists a strictly increasing sequence of positive integers $(\mu_{n})_{n\in\nn}$, such that $\displaystyle\frac{\lambda_{\mu_{n}}}{\mu_{n}}\to{+\infty}$. Thus,  $\displaystyle\frac{\lambda_{\mu_{n}}}{\mu_{n}+1}\to{+\infty}$, as $n\to{+\infty}$.\\ \\
It is easy to see that the sets $K_{m_{2}}-\zeta_{0}=\{z-\zeta_{0},\,\forall{z\in{K_{m_{2}}}}\}$ and $L-\zeta_{0}=\{z-\zeta_{0},\,\forall{z\in{L}}\}$ are compact, disjoint and they have connected complements. Applying proposition \ref{B-Wtype}, we have that:
$$\limsup_{n}[d_{\lambda_{\mu_n},\mu_n+1}(f_{j_{2}}(z+\zeta_{0})-p(z+\zeta_{0}), K_{m_{2}}-\zeta_{0},L-\zeta_{0} )]^{\frac{1}{\lambda_{\mu_n}}}<\theta,$$
for some $\theta\in (0,1)$.\\
Therefore, there exists $N\in\nn$ such that:
$$[d_{\lambda_{\mu_n},\mu_n+1}(f_{j_{2}}(z+\zeta_{0})-p(z+\zeta_{0}), K_{m_{2}}-\zeta_{0},L-\zeta_{0} )]^{\frac{1}{\lambda_{\mu_n}}}<\theta, \ n\geq N.$$
Thus, we may choose a sequence of polynomials  $P_n$, $n\geq N$, with $deg^{-}P_n \geq \mu_{n}+1$ and $deg P_n\leq \lambda_{\mu_{n}}$
such that
$$\sup_{z\in{K_{m_{2}}-\zeta_{0}}}|f_{j_{2}}(z+\zeta_{0})-p(z+\zeta_{0})-P_{n}(z)|\leq{\theta^{\lambda_{\mu_{n}}}}\text{ and }\sup_{z\in{L-\zeta_{0}}}|P_{n}(z)|\leq{\theta^{\lambda_{\mu_{n}}}}.$$
Let us fix $n_0\in\nn$, $n_0\geq N$ such that $\mu_{n_0}\geq{deg\,p(z)}$ and $\displaystyle\theta^{\lambda_{\mu_{n_0}}}<\min\{\frac{\varepsilon}{2},\frac{1}{2s}\}$.
\\We set $f(z)=P_{n_0}(z-\zeta_{0})+p(z)$.
\\Then

$S_{\mu_{n_0}}(f,\zeta_{0})(z)=S_{\mu_{n_0}}(p,\zeta_{0})(z)=p(z)$.
\\Also,

$S_{\lambda_{\mu_{n_0}}}(f,\zeta_{0})(z)=f(z).$
\\Thus,

$$\sup_{z\in{L}}|f(z)-g(z)|=\sup_{z\in{L}}|P_{n_0}(z-\zeta_{0})+p(z)-g(z)|\leq$$
$$\leq{\sup_{z\in{L}}|P_{n_0}(z-\zeta_{0})|+\sup_{z\in{L}}|p(z)-g(z)|}=$$
$$\sup_{z\in{L-\zeta_{0}}}|P_{n_0}(z)|+\sup_{z\in{L}}|p(z)-g(z)|\leq{\theta^{\lambda_{\mu_{n_0}}}+\frac{\varepsilon}{2}}<\varepsilon,$$

$$\sup_{z\in{K_{m_{1}}}}|S_{\mu_{n_0}}(f,\zeta_{0})(z)-f_{j_{1}}(z)|=\sup_{z\in{K_{m_{1}}}}|p(z)-f_{j_{1}}(z)|<\frac{1}{2s}<\varepsilon,$$

\begin{center}$\displaystyle\sup_{z\in{K_{m_{2}}}}|S_{\lambda_{\mu_{n_0}}}(f,\zeta_{0})(z)-f_{j_{2}}(z)|=\sup_{z\in{K_{m_{2}}}}|P_{n_0}(z-\zeta_{0})+p(z)-f_{j_{2}}(z)|=$

$\displaystyle=\sup_{z\in{K_{m_{2}}-\zeta_{0}}}|P_{n_0}(z)+p(z+\zeta_{0})-f_{j_{2}}(z+\zeta_{0})|\leq{\theta^{\lambda_{\mu_{n_0}}}}<\varepsilon.$\end{center}

 The function $f$ satisfies all the requirements and the result follows.

\end{proof}

\section{Negative Result}
In this section we will use the notation $U_{CT}(\Omega, (\lambda_n))$ for the corresponding class of G. Costakis and N.Tsirivas in any simply connected domain 
$\Omega$. Obviously  $U_{CV}(\Omega, (\lambda_n))\st U_{CT}(\Omega, (\lambda_n))$.
So, in order to prove that the class $U_{CV}(\Omega, (\lambda_n))$ is otherwise empty  we will prove the stronger result  that the class $U_{CT}(\Omega, (\lambda_n))$ is empty. For this we need  the following result  of $J. M\ddot{u}ller$ and A. Yavrian in \cite{M-Y}.

\begin{theorem}\label{5.3}($M\ddot{u}ller-Yavrian$) Let $\Gamma$ be a compact and connected subset of $\cc$, but not a singleton. Let $E\subset\cc$ be a closed set such that $Ε$ is non-thin at $\infty$.
Also suppose $(P_{n})$ to be a sequence of polynomials with $deg\,P_{n}\leq{d_{n}}$, for some increasing sequence of positive integers $(d_{n})$ and having the following properties:

\begin{itemize}\item[$(\alpha)$] there exists a function $f:\Gamma\to\cc$ with $$\limsup_{n\to{+\infty}}||f-P_{n}||_{\Gamma}^{1/d_{n}}<1,$$

\item[$(\beta)$] for all $z\in{E}$ $$\limsup_{n\to{+\infty}}|P_{n}(z)|^{1/d_{n}}\leq{1}.$$\end{itemize}

Then the following statement is true:

\begin{itemize}\item[$(i)$] if the sequence $(d_{n+1}/d_{n})$ is bounded, then $f$ extends to an entire function and for every compact set $K\subset{\cc}$ we have
$$\limsup_{n\to{+\infty}}||f-P_{n}||_{Κ}^{1/d_{n}}<1,$$

\item[$(ii)$] if, for arbitrary $(d_{n})$, the function $f$ is analytic on $\Gamma$, then $f$ extends to a holomorphic function having a simply connected domain of existence $G_{f}\subset\cc$ ($G_{f}$ denotes the unique largest domain on which $f$ extends as a holomorphic function; observe that $G_{f}$ exists in this case) and for every compact set $K\subset{G_{f}}$ we have 

$$\limsup_{n\to{+\infty}}||f-P_{n}||_{Κ}^{1/d_{n}}<1.$$
\end{itemize}
\end{theorem}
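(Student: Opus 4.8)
The plan is to reduce everything to the behaviour of the consecutive differences $D_n=P_{n+1}-P_n$, which are polynomials of degree at most $d_{n+1}$, and to control them by potential theory. Because $\Gamma$ is compact, connected and not a singleton, its unbounded complement carries a Green's function $g(\cdot,\infty)=g_{\Gamma}(\cdot,\infty)$ with pole at infinity and $\Gamma$ has strictly positive logarithmic capacity; this is precisely what makes the Bernstein--Walsh inequality
\[
\frac{1}{\deg P}\log|P(z)|\le \frac{1}{\deg P}\log\|P\|_{\Gamma}+g(z,\infty)
\]
available for every polynomial $P$. First I would note that, $f$ being bounded on the compact set $\Gamma$, hypothesis $(\alpha)$ forces $\frac{1}{d_n}\log\|P_n\|_{\Gamma}\to$ a nonpositive limit and, more usefully, that $\|D_n\|_{\Gamma}\le\|f-P_{n+1}\|_{\Gamma}+\|f-P_n\|_{\Gamma}$ decays geometrically, so that $\limsup_n\|D_n\|_{\Gamma}^{1/d_{n+1}}=\theta_0<1$. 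Here the case-$(i)$ hypothesis, that $d_{n+1}/d_n$ be bounded, is exactly what lets me pass from the exponent $1/d_n$ to $1/d_{n+1}$ without losing the gain below $1$. Feeding this into Bernstein--Walsh yields $\frac{1}{d_{n+1}}\log|D_n(z)|\le -c+g(z,\infty)$ eventually, for some $c>0$, hence $D_n\to 0$ geometrically on each sublevel set $\{g(\cdot,\infty)<c\}$; telescoping, $P_n$ converges there and $f$ already continues holomorphically to this neighbourhood of $\Gamma$.

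The whole difficulty is to push the region of convergence outward, and this is where hypothesis $(\beta)$ and the non-thinness of $E$ at $\infty$ must enter. I would consider the subharmonic functions $w_n(z)=\frac{1}{d_{n+1}}\log|D_n(z)|$; from $|D_n|\le|P_{n+1}|+|P_n|$ and $(\beta)$ one obtains $\limsup_n w_n(z)\le 0$ for every $z\in E$, while the degree bound gives $w_n(z)\le\log^{+}|z|+o(1)$ near infinity. Passing to the upper-semicontinuous regularisation $\widetilde w$ of $\limsup_n w_n$, which is subharmonic off a polar set, I get a single subharmonic function satisfying $\widetilde w\le -c+g(\cdot,\infty)$ everywhere, $\widetilde w\le 0$ quasi-everywhere on $E$, and growth at most $\log|z|+O(1)$. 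The key step is then a potential-theoretic propagation lemma: a set that is non-thin at $\infty$ cannot serve as an exceptional set on which a subharmonic function of such controlled growth stays $\le 0$ while remaining positive in a neighbourhood of infinity. Concretely, non-thinness at $\infty$ (a Wiener-type criterion in the fine topology) guarantees that the global growth indicator of $\widetilde w$ is controlled by its values on $E$, which forces $\widetilde w\le 0$ on all of $\cc$. I expect this propagation step to be the main obstacle, since it is the only place where the precise meaning of thinness at infinity is genuinely used, and it is what upgrades the pointwise hypothesis $(\beta)$ into a uniform statement.

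Once it is in hand, case $(i)$ follows: combining the global bound $\widetilde w\le 0$ with the strict gain $-c+g(\cdot,\infty)$ coming from $\Gamma$, a harmonic-majorant argument interpolating between the region of strict decay and the region of mere non-growth yields $\sup_K\widetilde w<0$ for every compact $K$, i.e. geometric decay of $D_n$ on each compact set; telescoping then makes $f$ entire and gives $\limsup_n\|f-P_n\|_K^{1/d_n}<1$ for all compact $K\subset\cc$. For case $(ii)$, where $d_{n+1}/d_n$ need not be bounded but $f$ is assumed analytic on $\Gamma$, I would replace the crude exponent comparison by a direct analysis of the maximal set on which the geometric estimate survives. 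Analyticity of $f$ on $\Gamma$ lets me start the continuation from a genuine neighbourhood, the same propagation lemma controls the growth, and the overconvergence estimate shows the polynomials cannot converge geometrically past the natural boundary of $f$; the region one reaches is therefore exactly the domain of existence $G_f$. Its simple connectedness is forced because the Green's-function sublevel sets driving Bernstein--Walsh are simply connected and the estimate propagates along them. The bookkeeping required to identify $G_f$ and to exclude convergence beyond it is the technical heart of part $(ii)$, but it rests on the very same non-thinness propagation as part $(i)$.
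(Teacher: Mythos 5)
A preliminary remark: the paper contains \emph{no} proof of this statement. It is the M\"{u}ller--Yavrian theorem, quoted from \cite{M-Y} and used as a black box in Section 3, so your attempt can only be judged against the mathematical content of the theorem itself, not against an internal argument of the paper.

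The first stage of your reduction is sound. Passing to the differences $D_n=P_{n+1}-P_n$, using that the continuum $\Gamma$ is non-polar so that Bernstein--Walsh applies, noting (correctly, and this is exactly where boundedness of $d_{n+1}/d_n$ enters) that $\limsup_n \|D_n\|_{\Gamma}^{1/d_{n+1}}<1$, and forming the Brelot--Cartan regularization $\widetilde w$ of $\limsup_n w_n$, $w_n=\frac{1}{d_{n+1}}\log|D_n|$, gives a subharmonic $\widetilde w$ with $\widetilde w\le -c+g_{\Gamma}(\cdot,\infty)$ on $\cc$ and $\widetilde w\le 0$ quasi-everywhere on $E$. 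If your ``propagation lemma'' ($\widetilde w\le 0$ on all of $\cc$) were available, part (i) would follow even faster than you indicate: a subharmonic function bounded above on $\cc$ is constant, the bound $-c+g_{\Gamma}$ forces that constant to be $\le -c$, and Hartogs' lemma plus telescoping then give the entire extension and the estimate on every compact set.

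The genuine gap is that this propagation lemma \emph{is} the theorem: it is the entire content of \cite{M-Y}, and you assert it rather than prove it. Worse, the mechanism you point to does not deliver it. The soft consequence of non-thinness at $\infty$ is obtained by the Kelvin transform: $v(w)=\widetilde w(1/w)+\log|w|$ is subharmonic near $0$ and bounded above; since $\widetilde w\le 0$ on $E$ minus a polar set (and one must check, though it is true, that deleting a polar set preserves non-thinness), the fine-limit property forces $v(0)=-\infty$, hence by upper semicontinuity $\widetilde w(z)-\log|z|\to-\infty$ as $z\to\infty$. But this conclusion is strictly weaker than $\widetilde w\le 0$ on $\cc$: the function $u(z)=\max\{\tfrac12\log|z|,0\}$ is subharmonic, satisfies $u(z)-\log|z|\to-\infty$, yet is positive on $|z|>1$. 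So the ``global growth indicator controlled by values on $E$'' step cannot be the soft fine-topology fact; it requires quantitative harmonic-measure (Wiener-criterion) estimates on the domains $D(0,R)\setminus E$, and for a general non-thin $E$ the Wiener sum may diverge arbitrarily slowly, so the two-constants bound $\widetilde w(z_0)\le(\log R+O(1))\,\omega(z_0,\partial D(0,R),D(0,R)\setminus E)$ does not obviously tend to $0$. This is precisely the hard analysis carried out by M\"{u}ller and Yavrian, and nothing routine substitutes for it. Part (ii) has a second gap of the same order: with $d_{n+1}/d_n$ unbounded, even your opening step fails ($\|D_n\|_{\Gamma}^{1/d_{n+1}}$ need not stay below $1$), so the whole difference-based scheme must be replaced, and the identification of the overconvergence region with the domain of existence $G_f$, its simple connectedness, and the exclusion of geometric convergence beyond $G_f$ are compressed into ``bookkeeping'' with no argument given.
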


\begin{theorem} Let $\Omega\subset\cc$ be a simply connected domain and $\zeta_{0}\in\Omega$. If $(\lambda_{n})_{n\in\nn}$ is a strictly increasing sequence of positive integers such that $\displaystyle\limsup_{n\in\nn}\frac{\lambda_{n}}{n}<+\infty$, then $U_{CT}(\Omega,(\lambda_{n}),\zeta_{0})=\emptyset$.
\end{theorem}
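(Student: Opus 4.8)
The plan is to argue by contradiction: suppose some $f\in U_{CT}(\Omega,(\lambda_{n}),\zeta_{0})$ exists (the statement is non-trivial only for $\Omega\neq\cc$, since otherwise there is no admissible $K$ and the density condition is vacuous). The polynomials I would feed into the Müller--Yavrian Theorem \ref{5.3} are the \emph{blocks}
\[
W_{n}\;=\;S_{\lambda_{n}}(f,\zeta_{0})-S_{n}(f,\zeta_{0})\;=\;\sum_{k=n+1}^{\lambda_{n}}\frac{f^{(k)}(\zeta_{0})}{k!}\,(z-\zeta_{0})^{k},
\]
so that $\deg W_{n}\le\lambda_{n}=:d_{n}$. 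The goal is to check hypotheses $(\alpha)$ and $(\beta)$ of Theorem \ref{5.3} along a suitable subsequence, apply conclusion $(ii)$, and reach an impossibility.

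Hypothesis $(\alpha)$ is where the assumption $\limsup_{n}\lambda_{n}/n<+\infty$ enters, and this is the structural heart of the argument. Fix $r$ with $0<r<d(\zeta_{0},\partial\Omega)$ and set $\Gamma=\{\,|z-\zeta_{0}|\le r\,\}\subset\Omega$, a nondegenerate continuum lying inside the disk of convergence. There the partial sums converge geometrically, $\|f-S_{m}\|_{\Gamma}\le C\theta^{m}$ for some $\theta\in(0,1)$, hence $\|W_{n}\|_{\Gamma}\le 2C\theta^{\,n}$. Taking $\lambda_{n}$-th roots and using $\lambda_{n}\le Mn$ eventually (any $M$ above $\limsup\lambda_{n}/n$) gives $\limsup_{n}\|W_{n}\|_{\Gamma}^{1/\lambda_{n}}\le\theta^{1/M}<1$. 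Thus $(\alpha)$ holds on $\Gamma$ with limit function $0$; it is exactly the boundedness of $\lambda_{n}/n$ that keeps the exponent $n/\lambda_{n}$ away from $0$ and so preserves the \emph{strict} inequality (were the ratio unbounded this would collapse, in accordance with the positive result).

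Hypothesis $(\beta)$ is the main obstacle: I must produce a closed set $E$, non-thin at $\infty$, with $\limsup_{n}|W_{n}(z)|^{1/\lambda_{n}}\le 1$ pointwise on $E$. The tension is that a set non-thin at $\infty$ is unbounded, whereas double universality only controls the partial sums on compacta, and on an unbounded set the blocks $W_{n}$ in fact blow up for the full sequence. I would exploit the fact that $(\beta)$ is a \emph{pointwise} condition and pass to a subsequence via a diagonal argument. Inside $\cc\setminus\Omega$ choose two disjoint pieces: a nondegenerate continuum $\Gamma^{\ast}\in\mathcal M$, and a closed set $E$, non-thin at $\infty$, exhausted by compacta $E_{j}\nearrow E$ with $K_{j}:=E_{j}\cup\Gamma^{\ast}\in\mathcal M$ and disjoint from $\Omega$ (such configurations are available, using the same freedom in choosing sets of $\mathcal M$ outside $\Omega$ as in the construction of the $K_{m}$). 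On $K_{j}$ take the target pair $(0,g_{j})$ with $g_{j}\equiv 0$ on $E_{j}$ and $g_{j}\equiv 1$ on $\Gamma^{\ast}$ (so $g_{j}\in A(K_{j})$). By the defining density of $U_{CT}$ I can select $n_{j}\nearrow\infty$ with $\|S_{n_{j}}(f,\zeta_{0})\|_{K_{j}}<1/j$ and $\|S_{\lambda_{n_{j}}}(f,\zeta_{0})-g_{j}\|_{K_{j}}<1/j$. Then $W_{n_{j}}\to 0$ pointwise on $E$, which yields $(\beta)$ along $(n_{j})$ with $d_{j}=\lambda_{n_{j}}$, while $W_{n_{j}}\to 1$ uniformly on $\Gamma^{\ast}$.

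Finally I would invoke Theorem \ref{5.3}. Since the $(\alpha)$-limit is the constant $0$, which is analytic on $\Gamma$, conclusion $(ii)$ applies for \emph{arbitrary} $(d_{j})$ — crucial here, since after passing to the subsequence I cannot control $\lambda_{n_{j+1}}/\lambda_{n_{j}}$. The zero germ extends to the entire function $0$, whose domain of existence is $\cc$, so $\limsup_{j}\|W_{n_{j}}\|_{K}^{1/\lambda_{n_{j}}}<1$ for every compact $K\subset\cc$. Taking $K=\Gamma^{\ast}$ forces $\|W_{n_{j}}\|_{\Gamma^{\ast}}\to 0$, contradicting $W_{n_{j}}\to 1$ on $\Gamma^{\ast}$. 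Hence $U_{CT}(\Omega,(\lambda_{n}),\zeta_{0})=\emptyset$, and since $U_{CV}(\Omega,(\lambda_{n}))\subset U_{CT}(\Omega,(\lambda_{n}))$ the same conclusion holds for the class $U_{CV}$.
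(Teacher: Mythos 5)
Your overall strategy coincides with the paper's (contradiction via the M\"uller--Yavrian theorem, applied to the blocks $S_{\lambda_n}-S_n$ along a subsequence extracted from double universality, with limit function $0$ and conclusion $(ii)$ giving locally uniform convergence to $0$ that contradicts convergence to $1$ on a set outside $\Omega$). But your implementation genuinely differs in two respects, and both differences are sound. First, you feed the blocks $W_{n_j}$ themselves into the theorem with $d_j=\lambda_{n_j}$, and you verify $(\alpha)$ \emph{inside} $\Omega$, on a closed disk $\Gamma\subset\Omega$, using geometric convergence of the Taylor series; the boundedness of $\lambda_n/n$ enters through the exponent $n/\lambda_n\ge 1/M$. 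The paper instead works entirely outside $\Omega$: it factors $S_{\lambda_{n_k}}-S_{n_k}=(z-\zeta_0)^{n_k+1}P_k(z)$, uses $|z-\zeta_0|>2^{C}$ on its sets $E_k$ to make $P_k$ exponentially small there, and the ratio bound enters through $\deg P_k\le Cn_k$. Second, you replace the paper's constant target pair $(0,1)$ (which forces the factorization trick, since the block is then of size $\approx 1$ on $E_k$) by the piecewise-constant target $g_j\in A(E_j\cup\Gamma^*)$ equal to $0$ on $E_j$ and $1$ on $\Gamma^*$, which makes the blocks small on $E_j$ directly. This is a real simplification: it eliminates the division step and its bookkeeping.

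The genuine gap is the sentence ``such configurations are available, using the same freedom \ldots as in the construction of the $K_m$.'' That is precisely where the paper does its hard, non-routine work, and the Nestoridis exhaustion $(K_m)$ does not supply it. You need: (1) a closed $E\subset\cc\setminus\Omega$ that is \emph{non-thin at $\infty$} --- this is not a free choice; the paper obtains it by taking $E$ to be essentially all of $\Omega^c$ outside a disk and invoking Ransford's theorem (connectedness of $\cc_\infty\setminus\Omega$) plus locality of thinness; (2) compact exhausting pieces $E_j$ with $E_j\cup\Gamma^*$ having \emph{connected complement} and disjoint from $\Omega$ --- the paper proves this for its annular pieces using the unboundedness of $\Omega$, and this argument genuinely fails when $\Omega$ is bounded (then the annular piece of $\Omega^c$ is a full annulus, whose complement is disconnected), which is why the paper needs a separate construction with real segments $[N+1,N+n]$ in that case; your proposal has no trace of this unavoidable case split. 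Finally, your insistence that $\Gamma^*$ be a nondegenerate continuum in $\mathcal{M}$ is both unnecessary (a single point outside $\Omega$ suffices for the contradiction --- the paper uses the single point $1+\zeta_0$) and itself a nontrivial existence claim for a general simply connected $\Omega$. The analytic core of your argument is correct, but as written the proof is incomplete exactly at the geometric/potential-theoretic construction it takes for granted.
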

\begin{proof} Assume first that $\Omega$ is not bounded.  Arguing by contradiction, suppose that there exists $f\in{U_{CT}(\Omega,(\lambda_{n}),\zeta_{0})}$.  Since the sequence $\displaystyle(\frac{\lambda_{n}}{n})_{n\in\nn}$ is bounded, there exists $C>0$ such that $\displaystyle\frac{\lambda_{n}}{n}<C$, for every $n\in\nn$.\\
We consider the sets $$E_{n}=\Omega^{c}\cap{(D(\zeta_{0},2^{C}))^{c}}\cap{\overline{D(\zeta_{0},2^{C}+n)}}, \ n=1,2,\ldots$$
(we denote by $D(z,r)$ the open disk of center $z$ and radious $r$).\\
Without loss of generality, we may assume that $E_n\ne \emptyset, \ n\geq 1$ , since this is eventually the case anyway (or by choosing suitable $C>0$).\\ 
For every $n\in\nn$, it is easy to see that the set $E_{n}\st\Omega^c$ is closed and bounded, thus is compact. Moreover, each $E_n$ has connected complement. To see this note that 
$E_{n}^{c}=\Omega\cup{D(\zeta_{0},2^{C})}\cup{\overline{D(\zeta_{0},2^{C}+n)}^{c}}$, $\zeta_0\in \Omega\cap D(\zeta_{0},2^{C})$ and $\Omega$ is unbounded.\\
Let $\displaystyle E=\bigcup_{n\in\nn}E_{n}=\Omega^{c}\cap{D(\zeta_{0},2^{C})^{c}}.$ Since $\Omega^c$ is connected, it is non-thin at $\infty$ (see page 79 theorem 3.8.3 in \cite{R} ). Because thinness is a local property (see definition page 79 in \cite{R}), $E$ is also non-thin at $\infty$. 

Now we use the fact that  $f$ belongs to the class $U_{CT}(\Omega,(\lambda_{n}),\zeta_{0})$, to fix an  $n_{1}\in\nn$, such that

$$\sup_{z\in{E_{1}\cup\{1+\zeta_{0}\}}}\left|S_{n_{1}}(f,\zeta_{0})(z)\right|<\frac{1}{2^{C+1}}\text{ and }\sup_{z\in{E_{1}\cup\{1+\zeta_{0}\}}}\left|S_{\lambda_{n_{1}}}(f,\zeta_{0})(z)-1\right|<\frac{1}{2^{C+1}}.$$
Thus,
$$\sup_{z\in{E_{1}\cup\{1+\zeta_{0}\}}}\left|S_{\lambda_{n_{1}}}(f,\zeta_{0})(z)-S_{n_{1}}(f,\zeta_{0})(z)-1\right|\leq$$

$$\leq\sup_{z\in{E_{1}\cup\{1+\zeta_{0}\}}}\left|S_{n_{1}}(f,\zeta_{0})(z)\right|+\sup_{z\in{E_{1}\cup\{1+\zeta_{0}\}}}\left|S_{\lambda_{n_{1}}}(f,\zeta_{0})(z)-1\right|<\frac{1}{2^{C}}.$$
For every $z\in\cc$, we set
 $$S_{\lambda_{n_{1}}}(f,\zeta_{0})(z)-S_{n_{1}}(f,\zeta_{0})(z)=(z-\zeta_{0})^{n_{1}+1}P_{1}(z),$$ 
where $P_1$ is a polynomial with \\ $\displaystyle deg\,P_{1}\leq{\lambda_{n_{1}}-(n_{1}+1)}<\lambda_{n_{1}}-n_{1}=n_{1}\left(\frac{\lambda_{n_{1}}}{n_{1}}-1\right)\leq{n_{1}(C-1)}.$\\
Consequently,
$$\sup_{z\in{E_{1}\cup\{1+\zeta_{0}\}}}|(z-\zeta_{0})^{n_{1}+1}P_{1}(z)-1|<\frac{1}{2^{C}}.$$
Also, since $z\in E_1$ implies that $|z-\zeta_0|>2^c$ we have:
$$\sup_{z\in{E_{1}}}|P_{1}(z)|\leq\sup_{z\in{E_{1}}}\frac{1}{|(z-\zeta_{0})^{n_{1}+1}|}\sup_{z\in{E_{1}}}|(z-\zeta_{0})^{n_{1}+1}P_{1}(z)|\leq$$
$$\leq\frac{1}{2^{C(n_{1}+1)}}\left(\sup_{z\in{E_{1}}}|(z-\zeta_{0})^{n_{1}+1}P_{1}(z)-1|+1\right)\leq\frac{1}{2^{C(n_{1}+1)}}\left(\frac{1}{2^{C}}+1\right)=$$
$$=\frac{1}{2^{C\,n_{1}}}\left(\frac{1}{4^{C}}+\frac{1}{2^{C}}\right)<\frac{1}{2^{C\,n_{1}}}.$$
Repeating the above argument, we fix an  $n_{2}\in\nn$, with $n_{2}>n_{1}$, such that
$$\sup_{z\in{E_{2}\cup\{1+\zeta_{0}\}}}\left|S_{n_{2}}(f,\zeta_{0})(z)\right|<\frac{1}{2^{C+2}}\text{ and }\sup_{z\in{E_{2}\cup\{1+\zeta_{0}\}}}\left|S_{\lambda_{n_{2}}}(f,\zeta_{0})(z)-1\right|<\frac{1}{2^{C+2}}.$$
As before, we set 
$$\frac{S_{\lambda_{n_{2}}}(f,\zeta_{0})(z)-S_{n_{2}}(f,\zeta_{0})(z)}{(z-\zeta_{0})^{n_{2}+1}}=P_{2}(z).$$
Then $P_2$ is a polynomial of degree less than $ n_{2}(C-1).$\\
Moreover as before,
$$\sup_{z\in{E_{2}\cup\{1+\zeta_{0}\}}}|(z-\zeta_{0})^{n_{2}+1}P_{2}(z)-1|<\frac{1}{2^{C+1}}.$$
And,
$$\sup_{z\in{E_{2}}}|P_{2}(z)|<\frac{1}{2^{C\,n_{2}}}.$$
Proceeding inductively, we conclude that for every $k\in\nn$, there exists $n_{k}\in\nn$, with $n_{k}>n_{k-1}$ and polynomial $P_k$, $deg P_k\leq C n_k$ such that
\begin{equation}\label{0.1+zeta0}\displaystyle\sup_{z\in{E_{k}\cup\{1+\zeta_{0}\}}}|(z-\zeta_{0})^{n_{k}+1}P_{k}(z)-1|<\frac{1}{2^{C+k-1}}\end{equation} and \begin{center}$\displaystyle\sup_{z\in{E_{k}}}|P_{k}(z)|\leq{\frac{1}{2^{C\,n_{k}}}}$.\end{center}
Let $z\in E$. Then 
 \begin{equation}\label{1.MY}\displaystyle\limsup_{k\to\infty}|P_{k}(z)|^{1/C\,n_{k}}\leq{\lim_{k\to\infty}\frac{1}{2}}=\frac{1}{2}.\end{equation}
(Note that, $(E_{n})_{n\in\nn}$ is an increasing sequence of compact sets.)\\
Also, since $\displaystyle\sup_{z\in{E_{1}}}|P_{k}(z)|\leq{\sup_{z\in{E_{k}}}|P_{k}(z)|}\leq{\frac{1}{2^{C\,n_{k}}}}$, for every $k\in\nn$  \begin{equation}\label{2.MY}\displaystyle\limsup_{k\to\infty}||P_{k}(z)||_{E_{1}}^{1/C\,n_{k}}\leq{\lim_{k\to\infty}\left(\frac{1}{2^{C\,n_{k}}}\right)^{1/C\,n_{k}}}=\frac{1}{2}<1.\end{equation}
Therefore, we may apply the theorem $M\ddot{u}ller-Yavrian$, to a compact and connected subset  $\Gamma$ of $E_{1}$ (containing more than one points), the closed set $E$, which is non-thin at $\infty$, the sequence of polynomials $P_k$, $d_k=C\,n_{k}$, $k\in\nn$,  and the function $f\equiv{0}$. From inequalities $(\ref{1.MY})$, $(\ref{2.MY})$, we observe that the conditions of the theorem are fulfilled. Since the function $f\equiv{0}$ is entire, it follows that $P_{k}\to{0}$, uniformly on all compact subsets of $\cc$. In particular, $P_{k}(1+\zeta_{0})\to{0}$, as $k\to\infty$.
On the other hand, $(\ref{0.1+zeta0})$ implies that $P_{k}(1+\zeta_{0})\to{1}$, as $k\to\infty$, which is a contradiction. As a result, in case that $\Omega$ is not bounded, the class $U_{CT}(\Omega,(\lambda_{n}),\zeta_{0})$ is empty.

\vspace{5mm}
Now if the set $\Omega$ is bounded, then  $\Omega\cup{D(\zeta_{0},2^{C})}\subset{D(0,N)}$, for some $N\in\nn$. One can apply the previous procedure for the
compact sets $E_{n}=[N+1,N+n]$, for $n>1$ and the result follows.

\end{proof}
\textbf{Acknowledgement: } We would like to thank G. Costakis and N. Tsirivas, for sharing with us the unpublished version of their work, thus informing us about these nice problems.

N. Chatzigiannakidou and V.Vlachou, \\
Department of Mathematics,\\
University of Patras,\\
26500 Patras,GREECE\\
E-mail addresses:\\
 ni.chatzig@gmail.com (N.Chatzigiannakidou ),\\ 
vvlachou@math.upatras.gr (V.Vlachou)

\end{document}